\DeclareMathOperator{\tr}{tr}
\newcommand{\rr}{\mathfrak{r}}
\def\R{\mathbb{R}}
\def\theta{\vartheta}
\def\phi{\varphi}
\def\epsilon{\varepsilon}
\newcommand{\A}[1]{\ifthenelse{#1 = 2}{\lvert A\rvert^{#1}}{\tr A^{#1}}}
\newcommand{\Akl}[2]{\ifthenelse{#1 = 2}%
{\left(\lvert A\rvert^{#1}\right)^{#2}}%
{\left(\tr A^{#1}\right)^{#2}}}
\newtheorem{theorem}{Theorem}[section]
\newtheorem{lemma}[theorem]{Lemma}
\newtheorem{corollary}[theorem]{Corollary}
\theoremstyle{definition}
\theoremstyle{remark}
\newtheorem{remark}[theorem]{Remark}
\numberwithin{equation}{section} \setcounter{tocdepth}{1}
\begin{document}

\title{Surfaces moving by powers of Gauss curvature}

%    Information for first author
\author{Ben Andrews}
\address{Mathematical Sciences Center, Tsinghua University;  Mathematical
Sciences Institute, Australia National University;  and Morningside Center
for Mathematics, Chinese Academy of Sciences.}
\email{Ben.Andrews@anu.edu.au}
\thanks{Research partly supported by Discovery Projects grants DP0985802 and DP120100097 of
 the Australian Research Council.  The authors are grateful for the hospitality of
 the Mathematical Sciences Centre of Tsinghua University where this work was carried out.}
\author{Xuzhong Chen}
\address{Mathematical Sciences Center, Tsinghua University}
 \email{cxzmath@yahoo.com.cn}
\subjclass[2010]{Primary 53C44; Secondary 35K96, 58J35}
% 53C44 Geometric evolution equations (mean curvature flow)
% 35B40 Asymptotic behavior of solutions

\dedicatory{}

\keywords{}

\begin{abstract}
We prove that strictly convex surfaces moving by $K^{\alpha/2}$ become
spherical as they contract to points, provided $\alpha$ lies in the range $[1,2]$.  In the process
we provide a natural candidate for a curvature pinching quantity for surfaces
moving by arbitrary functions of curvature, by finding a quantity conserved by 
the reaction terms in the evolution of curvature. 
\end{abstract}

\maketitle

%%%%%%%%%%%%%%%%%%%%%%%%%%%%%%%%%%%%%%%%%%%%%%%%%%%%%%%%%%%%%%%%%%%%%%
\section{INTRODUCTION}\label{introduction sec}

In this paper we study the contraction of smooth, compact, convex surfaces
with speed given by a power of the Gauss curvature:  That is, we
consider a family of embeddings $X:\ S^2\times[0,T)\to\R^3$ such that
\begin{equation}\label{flow eqn}
\frac{\partial X}{\partial t}=-K^{\alpha/2}\nu
\end{equation}
where $\alpha \in [1 , 2]$, $K$ is the Gauss curvature and
$\nu$ denotes the outer unit normal to the evolving surface $M_t=X(S^2,t)$.\par
 For $\alpha=2$ this flow coincides with the Gauss curvature
 flow, which was introduced by Firey \cite{Firey} as a model for
 the shape of wearing stones.
 Firey conjectured that the surfaces should become spherical as
 they contract to points in this process,
 and this was confirmed by the first author in \cite{AndrewsStones}.
 Bennett Chow \cite{Chow1} showed that flow
 by $K^{\alpha/n}$
 with $\alpha>0$ shrinks convex hypersurfaces in
 $\R^{n+1}$ to points in finite
 time, and that the limiting shape is spherical if $\alpha\geq 1$ and
  the curvature of the initial hypersurface is sufficiently pinched.
\par
  Our main theorem is the following:
\begin{theorem}\label{main thm}
Let $M_{0}=X_{0}(M)$ be a compact, smooth, strictly convex surface
in $\R^3$, given by an embedding $X_0$. Then there exists a maximal $T>0$ and a unique,
smooth solution $\{M_t=X_t (M)\}$ of the evolution
equation (\ref{flow eqn}) with $M_{0}=X_{0}(M)$. The surfaces $M_t$
are smooth and strictly convex, and converge to $q\in \R^3$ as $t$ approaches
$T$. Rescaling about $q$ gives smooth convergence to a sphere
\begin{equation}
\tilde{X}(t)=\frac {X-q} {\left((\alpha+1)(T-t)\right)^{\frac {1}
{\alpha +1}}}\rightarrow \tilde{X}_{T}
\end{equation}
in $C^{\infty}$, where $\tilde{X}_{T}$ is a smooth embedding with
$\tilde{X}_{T}(M)=\mathbb{S}^{2}(1)\subset \R^3$.
\end{theorem}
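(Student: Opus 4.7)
Short-time existence of a smooth solution follows from standard parabolic theory, since $F = K^{\alpha/2}$ is smooth and strictly monotone in the principal curvatures on the positive cone. Strict convexity is preserved by a tensor maximum principle applied to the Weingarten operator, so all principal curvatures remain positive as long as the solution exists. Comparison with shrinking spheres, via the avoidance principle, forces a finite maximal time $T>0$; bounds on the circumradius and inradius derived from the evolution of the support function (essentially the argument of Chow \cite{Chow1}, adapted to $n=2$) then force the surfaces to contract to a single point $q\in\R^3$ as $t\to T$.

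The heart of the argument is a curvature pinching estimate showing that $\lambda_1/\lambda_2$ stays bounded along the flow. My plan is to look for a scalar quantity $G(\lambda_1,\lambda_2)$, symmetric and homogeneous, vanishing exactly on the diagonal $\lambda_1=\lambda_2$, that behaves as a subsolution under the flow. First I would isolate the purely reactive part of the evolution of the Weingarten operator, that is, the ODE obtained by discarding all diffusion and gradient-of-curvature terms, and determine its invariants. A natural invariant for $F=K^{\alpha/2}$ should be expressible as a homogeneous combination of powers of the principal radii $1/\lambda_i$, chosen so that the reaction contribution to $\partial_t G$ vanishes identically. Once $G$ is pinned down in this way, what remains in $\partial_t G - F^{ij}\nabla_i\nabla_j G$ is a quadratic form in $\nabla h$ built from $F^{ij,kl}$ and the second derivatives of $G$ in the curvatures; showing that this combined quadratic form has the right sign at an interior maximum is the crux of the argument, and the restriction $\alpha\in[1,2]$ enters here through the concavity of $K^{\alpha/2}$ on the positive cone.

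Once pinching is established, the principal curvatures are all comparable and the flow is uniformly parabolic on the set where the speed is comparable to its maximum. A Tso-type argument applied to $F/(u-c)$, where $u$ is the support function centred at a suitable interior point, gives a lower bound on $F$ matching the upper bound $F_{\max}\leq C(T-t)^{-\alpha/(\alpha+1)}$ obtained from the maximum principle applied to $F$ itself. Combined with pinching, this yields two-sided bounds on the rescaled principal curvatures, after which the concavity of $F$ permits Evans--Krylov estimates on the second derivatives of the defining function, and Schauder iteration gives uniform $C^{k,\beta}$ bounds on the rescaled embedding $\tilde X(t)$.

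Finally, these uniform bounds provide subsequential smooth convergence of $\tilde X(t)$ to a strictly convex limit, and the pinching estimate together with the scaling of the flow forces any such limit to be totally umbilic, hence a round sphere; the normalisation in the statement fixes this sphere to be $\mathbb{S}^2(1)$, and uniqueness upgrades subsequential convergence to full convergence. The main obstacle throughout is the pinching step: producing the invariant $G$ of the reaction ODE and then verifying that the residual gradient-of-curvature terms have the correct sign is delicate, and this is precisely the new ingredient the abstract advertises.
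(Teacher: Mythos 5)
Your overall architecture coincides with the paper's: existence, strict convexity, and contraction to a point are taken from Chow; the new content is a pinching function obtained by solving the reaction ODE for the curvature, followed by a sign check on the gradient terms at a spatial maximum; and the endgame (speed bounds, pinching implies roundness, regularity) follows the Andrews--McCoy machinery. The reaction-ODE invariant you propose to look for is exactly what the paper produces: working with the support function and the principal radii, the conserved quantity of the reaction system is $F\cdot(r_2-r_1)$, i.e.\ $(r_2-r_1)/(r_1r_2)^{\alpha/2}$, and the zero-order terms in the evolution of $G=-F(r_2-r_1)-C_t$ cancel identically for \emph{any} speed, not just $K^{\alpha/2}$. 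So far, so good.

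The gap is in the step you yourself identify as the crux. You assert that the restriction $\alpha\in[1,2]$ ``enters through the concavity of $K^{\alpha/2}$ on the positive cone.'' This cannot be the mechanism: $K^{\alpha/2}=(\kappa_1\kappa_2)^{\alpha/2}$ is concave in the Weingarten operator only for $\alpha\le 1$, while in the support-function formulation $F=-\det(\rr)^{-\alpha/2}$ is concave for \emph{every} $\alpha>0$; neither property singles out $[1,2]$, and the pinching estimate genuinely fails for $\alpha>2$ (the paper's explicit formulas for $Q_1,Q_2$ show this). Concavity of the speed controls the $\ddot F$ contribution to the evolution of $F$ itself, but the quantity that actually appears in the evolution of $G$ is the signed combination $\dot G^{ij}\ddot F^{kl,mn}-\dot F^{ij}\ddot G^{kl,mn}$, and for degree of homogeneity different from one this quadratic form in $\nabla\rr$ has no sign in general. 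The paper's actual argument is an interpolation in $\alpha$: after imposing $\nabla G=0$ and the Codazzi symmetries, the gradient terms reduce to $Q_1T_1^2+Q_2T_2^2$; writing the speed as $-k^{-\alpha}$ with $k$ homogeneous of degree one, one shows that $Q_1/\alpha$ and $Q_2/\alpha$ are \emph{convex quadratic} functions of $\alpha$ (the would-be quartic term vanishes identically, and the leading coefficient is non-negative by an explicit inequality verified for $k=\sqrt{r_1r_2}$), and non-positivity for all $\alpha\in[1,2]$ then follows from the two known endpoint cases $\alpha=2$ (Gauss curvature flow) and $\alpha=1$ (degree-one flows). Without this interpolation device, or a direct computation of $Q_1$ and $Q_2$, your plan has no means of verifying the sign condition, so the pinching step does not close.
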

The existence and uniqueness, smoothness and convexity of the
solutions, and the uniform convergence to a point, were all proved in
\cite{Chow1}. Our contribution is the last part of the above theorem.
The key step in the proof is a pinching estimate for the principal curvatures,
 analogous to that used in  \cite{AndrewsStones}.  We remark that
 the evolution of surfaces by curvature
 is rather well understood in the case where the speed is homogeneous
 of degree one in the principal curvatures
  \cite{AndrewsNonconcave}, but far less is known for other degrees
  of homogeneity:  There are a few flows where
 curvature estimates have been found which are strong enough to deduce
 that limiting shapes are
spheres, starting with the Gauss curvature flow in
\cite{AndrewsStones}. A collection of other flows have been treated
\cites{Schn,Schu}, including certain powers of mean curvature and
certain sums of powers of principal curvatures, for which curvature
pinching estimates were found using a computational search
algorithm.  However at present we are far from understanding such flows of surfaces in any
generality.  One of the contributions of the present paper is a natural
candidate for a quantity which measures the pinching of principal
curvatures to each other as the surface shrinks to a point:  We solve the
system of ordinary differential equations associated with the `reaction' terms 
in the evolution of curvature for quite general flows by functions of curvature.
We hope this method may be useful for a wider class of flows, and mention 
at the end of the paper several other classes of flows for which this quantity is
preserved. \par

%%%%%%%%%%%%%%%%%%%%%%%%%%%%%%%%%%%%%%%%%%%%%%%%%%%%%%%%%%%%%%%%%%%%%%
\section{PROOF OF THE THEOREM}\label{preliminary sec}
We work with a description of uniformly convex surfaces using their
support function as in \cite{AndrewsHarnack}: For each $z\in
S^2\subset\R^3$ we define $s(z,t)=\sup\{x\cdot z:\ x\in M_t\}$. Then
the surface $M_t$ is recovered by the embedding $X(z,t) =
s(z,t)z+\nabla s(z,t)$, where $\nabla s$ is the gradient vector of
$s(.,t)$ on $S^2$, interpreted as a vector in $\R^3$ tangent to
$S^2$ at $z$.  The principal radii of curvature (denoted $r_1$ and
$r_2$) are the eigenvalues of the symmetric positive definite
bilinear form $\rr_{ij}=\nabla_i\nabla_j s+g_{ij}s$, where $g_{ij}$
is the standard metric on $S^2$, and $\nabla$ the corresponding
connection.  The evolution equation \eqref{flow eqn} is then
 equivalent to the scalar parabolic equation
\begin{equation}\label{flow S}
\frac{\partial s}{\partial t} = F\left(\rr_{ij}\right),
\end{equation}
where $F(A) = -\det(A)^{-\alpha/2}$.  The Codazzi identity implies that
 $\nabla_i\rr_{jk}$ is totally symmetric.  The following evolution
 equation holds for $\rr_{ij}$ (see for example \cite{AndrewsMHGC}*{Equation 20}):
\begin{equation}\label{eq:evolve-r}
\frac{\partial}{\partial t}\rr_{ij} = \dot
F^{kl}\nabla_k\nabla_l\rr_{ij}+ \ddot
F^{kl,mn}\nabla_i\rr_{kl}\nabla_j\rr_{mn} +(F+\dot F^{kl}\rr_{kl})g_{ij}-\dot
F^{kl}g_{kl}\rr_{ij},
\end{equation}
where $\dot F^{ij}=\frac{\alpha}{2}\det
(A)^{-\alpha/2}(\rr^{-1})^{ij}$ is the matrix of derivatives of $F$
with respect to the components of $\rr_{ij}$, and $\ddot F$ gives
the corresponding second derivatives.

\subsection{The pinching estimate}\label{pinching sec}
In this section we prove an pinching estimate of curvature and
convergence of the evolving surfaces. We begin with the following
lemma.

\begin{lemma}\label{A2 conv pre}
If $\rr_{ij}\leq Cg_{ij}$ at $t=0$, then this remains true for all $t\in [0,T)$.
\end{lemma}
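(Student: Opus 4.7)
The plan is to apply Hamilton's tensor maximum principle to the symmetric two-tensor
$M_{ij}:=Cg_{ij}-\rr_{ij}$, which is positive semi-definite at $t=0$ by hypothesis. Since $\nabla g\equiv 0$ one has $\nabla_k\nabla_l M_{ij}=-\nabla_k\nabla_l\rr_{ij}$, and the identity
$\dot F^{kl}\rr_{kl}=\tfrac{\alpha}{2}\det(A)^{-\alpha/2}(\rr^{-1})^{kl}\rr_{kl}=\alpha\det(A)^{-\alpha/2}=-\alpha F$
(using $(\rr^{-1})^{kl}\rr_{kl}=n=2$) allows one to rewrite \eqref{eq:evolve-r} as
\begin{equation*}
\frac{\partial}{\partial t} M_{ij}\;=\;\dot F^{kl}\nabla_k\nabla_l M_{ij}\;+\;N_{ij},
\end{equation*}
where $N_{ij}=-\ddot F^{kl,mn}\nabla_i\rr_{kl}\nabla_j\rr_{mn}-(1-\alpha)Fg_{ij}+\dot F^{kl}g_{kl}\rr_{ij}$.

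The tensor maximum principle then reduces matters to the null eigenvector condition: whenever $M\geq 0$ admits a unit null eigenvector $v$ at some point, I need $N_{ij}v^iv^j\geq 0$. Such a $v$ is necessarily an eigenvector of $\rr$ whose eigenvalue is the larger principal radius, equal at this point to $C$; write the other eigenvalue as $\mu\leq C$. The gradient term contributes
$-\ddot F^{kl,mn}(\nabla_v\rr)_{kl}(\nabla_v\rr)_{mn}\geq 0$, because $F(A)=-\det(A)^{-\alpha/2}$ is concave on positive definite matrices (as $\det(A)^{-\alpha/2}=\exp(-\tfrac{\alpha}{2}\log\det A)$ is the exponential of a convex function for $\alpha>0$). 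Using $g(v,v)=1$ and $\rr(v,v)=C$, the zeroth order contribution becomes
\begin{equation*}
C\dot F^{kl}g_{kl}-(1-\alpha)F \;=\; \det(A)^{-\alpha/2}\!\left[\,\tfrac{\alpha}{2}\!\left(1+\tfrac{C}{\mu}\right)+(1-\alpha)\,\right] \;=\; \det(A)^{-\alpha/2}\!\left[\,1-\tfrac{\alpha}{2}+\tfrac{\alpha C}{2\mu}\,\right],
\end{equation*}
and the bracket is at least $1$ since $C/\mu\geq 1$. Hence $N_{ij}v^iv^j\geq\det(A)^{-\alpha/2}>0$, and the maximum principle yields $M\geq 0$ for all $t\in[0,T)$.

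The main obstacle is that for $\alpha\in(1,2]$ the term $-(1-\alpha)F$ has the ``wrong'' sign (recall $F<0$), so it cannot simply be discarded: it must be absorbed by the positive term $C\dot F^{kl}g_{kl}$. This absorption succeeds precisely because the null eigenvector singles out the \emph{larger} eigenvalue of $\rr$ and thereby forces $C/\mu\geq 1$. Concavity of $F$ takes care of the quadratic gradient term, and what remains is a pointwise algebraic inequality.
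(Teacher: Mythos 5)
Your proof is correct and follows essentially the same route as the paper: apply Hamilton's tensor maximum principle to $\pm(\rr_{ij}-Cg_{ij})$, dispose of the gradient terms by concavity of $F$, and verify the null-eigenvector condition on the zeroth-order terms using the homogeneity identity $\dot F^{kl}\rr_{kl}=-\alpha F$ and the fact that the null direction carries the largest principal radius. If anything, your explicit evaluation of the bracket $1-\tfrac{\alpha}{2}+\tfrac{\alpha C}{2\mu}\geq 1$ is slightly more careful than the paper's display, which writes as an equality what is really the inequality $-C\dot F^{kl}g_{kl}\leq \alpha F$.
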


\begin{proof}
Consider $M_{ij}=\rr_{ij}-Cg_{ij}$ with $C>0$ so large that
$M_{ij}\leq 0$
 at $t=0$. We wish to show that $M_{ij}\leq 0$ for $t\in
 [0,T)$. By \eqref{eq:evolve-r}, we have:
\begin{align*}
\frac{\partial}{\partial t}M_{ij} = \dot
F^{kl}\nabla_k\nabla_lM_{ij}+ \ddot
F^{kl,mn}\nabla_iM_{kl}\nabla_jM_{mn} +(1-\alpha)Fg_{ij}-\dot
F^{kl}g_{kl}\rr_{ij},
\end{align*}
Let $v$ be a zero eigenvector of $M_{ij}$ with $|v|=1$,
$M_{ij}v^{j}=(\rr_{ij}-Cg_{ij})v^{j}=0$. Then we have
\begin{align*}
\left((1-\alpha)Fg_{ij}-\dot F^{kl}g_{kl}\rr_{ij}\right)v^iv^j=F
\left((1-\alpha)g_{ij}+\alpha g_{ij}\right)v^iv^j =F<0
\end{align*}
since $F$ is a concave function of the components $\rr_{ij}$.
The result follows from Hamilton's maximum principle
\cite{Ham3D}*{Theorem 9.1}.
\end{proof}

\begin{theorem}\label{curvature pinch}
Let $\{M_t=X(M,t)_{0\leq t<T}\}$ be a smooth, strictly convex
solution of the flow equation (\ref{flow eqn}). Then
\begin{equation}\label{curvature bound}
\sup_{M}\frac{(r_1(x,t)-r_2(x,t))^2} {(r_1(x,t)r_2(x,t))^{\alpha}}
\leq\sup_{M}\frac{(r_1(x,0)-r_2(x,0))^2}
{(r_1(x,0)r_2(x,0))^{\alpha}}
\end{equation}
\end{theorem}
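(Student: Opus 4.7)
The plan is to apply the scalar parabolic maximum principle to the curvature pinching function
$$G := \frac{(r_1-r_2)^2}{(r_1r_2)^\alpha} = \frac{(\tr\rr)^2-4\det\rr}{(\det\rr)^\alpha},$$
regarded as a smooth function of $\rr_{ij}$ on positive definite $2\times 2$ symmetric matrices (the apparent singularity at $r_1=r_2$ is absent because $G$ is a rational function of the elementary symmetric polynomials of $\rr$). Writing $\dot G^{ij}$ and $\ddot G^{kl,mn}$ for the first and second derivatives of $G$ in its matrix argument, the chain rule combined with \eqref{eq:evolve-r} yields
$$\frac{\partial G}{\partial t}-\dot F^{ij}\nabla_i\nabla_j G = R + Q,$$
where $R=\dot G^{ij}\bigl[(F+\dot F^{kl}\rr_{kl})g_{ij}-\dot F^{kl}g_{kl}\rr_{ij}\bigr]$ collects the pointwise reaction terms and $Q=\bigl(\dot G^{ij}\ddot F^{kl,mn}-\dot F^{ij}\ddot G^{kl,mn}\bigr)\nabla_i\rr_{kl}\nabla_j\rr_{mn}$ is the remaining gradient-quadratic piece.

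Next I would verify that $R$ vanishes identically, which is the conceptual heart of the proof as flagged in the introduction: $G$ is by design a first integral of the reaction ODE
$$\dot r_i = (1-\alpha)F-(\tr\dot F)\,r_i$$
obtained from \eqref{eq:evolve-r} by dropping the covariant-derivative terms. With $F=-(r_1r_2)^{-\alpha/2}$ and $\tr\dot F=\tfrac{\alpha}{2}(r_1r_2)^{-\alpha/2}(r_1+r_2)/(r_1r_2)$, a direct computation shows that both $(r_1-r_2)^2$ and $(r_1r_2)^\alpha$ evolve at the common logarithmic rate $-2\tr\dot F$, so their quotient $G$ is constant along the ODE and therefore $R\equiv 0$.

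At a spatial maximum of $G(\cdot,t)$ one has $\nabla_i G=\dot G^{kl}\nabla_i\rr_{kl}=0$ and $\dot F^{ij}\nabla_i\nabla_j G\le 0$, so what remains is to prove $Q\le 0$ at such a point, and this is the main obstacle. Choose an orthonormal frame in which $\rr$ is diagonal with eigenvalues $r_1,r_2$; Codazzi symmetry then reduces the tensor $h_{ijk}:=\nabla_i\rr_{jk}$ to four independent components, and the two scalar equations $\nabla_iG=0$ cut this down to a two-parameter family of admissible gradients. Using the explicit formulas for $\dot G$, $\ddot G$, $\dot F$, $\ddot F$ in this basis, $Q$ becomes a quadratic form in those two parameters whose coefficients are rational functions of $r_1,r_2,\alpha$. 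The crux is to verify that this quadratic form is negative semidefinite for all $r_1,r_2>0$ whenever $\alpha\in[1,2]$; one anticipates that the restriction of $\alpha$ to this interval is exactly what is needed, with the endpoints $\alpha=1,2$ borderline. Once this algebraic inequality is confirmed, the scalar maximum principle yields $\tfrac{d}{dt}\sup_M G\le 0$ in the usual sense, giving \eqref{curvature bound}.
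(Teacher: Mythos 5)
Your setup matches the paper's: the same decomposition of the evolution of a pinching quantity into a reaction part $R$ and a gradient-quadratic part $Q$, and your observation that $R\equiv 0$ because the pinching quantity is a first integral of the reaction ODE is exactly the paper's computation (the paper works with $-F(r_2-r_1)=(r_2-r_1)/(r_1r_2)^{\alpha/2}$ rather than its square, adding an $\varepsilon(1+t)$ perturbation to handle the non-smoothness at umbilic points; your squared, rational version sidesteps that technicality). Your verification that $(r_1-r_2)^2$ and $(r_1r_2)^\alpha$ share the logarithmic rate $-2\tr\dot F$ along the ODE is correct.

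However, there is a genuine gap at precisely the step you label ``the crux'': you reduce $Q$ to a quadratic form in two parameters and then simply assert that one should ``verify'' its negative semidefiniteness for $\alpha\in[1,2]$, without doing so. This is the hardest part of the theorem and cannot be waved through --- indeed the sign of this form is delicate (it fails for $\alpha>2$ and for $\alpha<1/2$), so some genuine input is required. The paper's resolution is worth knowing: after using $\nabla_iG=0$ and the Codazzi symmetries to write $Q=Q_1T_1^2+Q_2T_2^2$, it shows that $Q_1/\alpha$ and $Q_2/\alpha$ are \emph{quadratic} in $\alpha$ (because the potentially cubic-in-$\alpha$ contribution $\ddot k\otimes\dot k\otimes\dot k$ term vanishes when evaluated on $\dot k^2e_1-\dot k^1e_2$) and that their leading coefficients are non-negative, hence they are \emph{convex} in $\alpha$; non-positivity then follows on all of $[1,2]$ by interpolating between the endpoint cases $\alpha=2$ (known from the Gauss curvature flow paper) and $\alpha=1$ (known for general degree-one flows). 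Without this convexity-in-$\alpha$ argument, or an explicit computation of $Q_1,Q_2$ as rational functions of $r_1,r_2,\alpha$ together with a sign analysis (which the paper records in a remark), your proof is incomplete. A further minor inaccuracy: $\alpha=1$ is not actually borderline for the sign of $Q$ (the estimate extends down to $\alpha=1/2$); only $\alpha=2$ is sharp for the gradient terms.
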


\begin{proof}
We order the principal radii so that $r_2>r_1$. Then $r_{2}$ and
$r_{1}$ may not be smooth functions on $M$, but they are smooth on a
neighbourhood of any non-umbillic point.  For small $\varepsilon>0$
we will prove the negativity of the quantity
\begin{align*}
G=-F(r_2-r_1)-C_t
\end{align*}
where $C_t=C_0+\varepsilon(1+t)$ and $C_0=\sup_{M}\frac{r_2(x,0)-r_1(x,0)}
{(r_1(x,0)r_2(x,0))^{\alpha/2}}$.  This choice makes $G$ strictly negative for $t=0$.
 Note that $G$ is smooth near any point where $G=0$, since such a point cannot be umbillic.
  If $G$ is not everywhere negative, we consider the first time $t$ where the maximum of $G$
   reaches zero, and a point $p$ where this occurs.  Using \eqref{eq:evolve-r} we obtain
the following evolution equation for $G$:
\begin{align}\label{eq:evolve-G}
\begin{split}
\frac{\partial G}{\partial
t}=&\dot{F}^{ij}\nabla_i\nabla_jG+(\dot{G}^{ij}
\ddot{F}^{kl,mn}-\dot{F}^{ij}\ddot{G}^{kl,mn})\nabla_{i}\rr_{kl}\nabla_{j}\rr_{mn},\\
+&\left(F+\dot F^{ij}\rr_{ij}\right)\dot{G}^{ij}g_{ij}-\dot
F^{kl}g_{kl}\dot{G}^{ij}\rr_{ij}-\varepsilon.
\end{split}
\end{align}
Choose local coordinates for $M$ near
$p$ such that  $\rr_{ij}(p,t) = \textrm{diag}(\lambda_1,\lambda_2)$.
For convenience we use the notation $\dot f^i=\frac{\partial F}{\partial r_i}$
and $\ddot f^{ij}=\frac{\partial^2 F}{\partial r_i\partial r_j}$,
and similarly $\dot g^i=\frac{\partial G}{\partial r_i}$ and $\ddot g^{ij}=
\frac{\partial^2 G}{\partial r_i\partial r_j}$.
We now estimate the second term on the right hand side of the above
evolution equation.   To do this we
use the method of \cite{AndrewsICM} to reduce the negativity of this term
to a pair of inequalities.
The computation uses
results from \cite{AndrewsPinching} which give the first and second derivatives of
$F$ and $G$ in the above orthonormal frame:
\begin{align*}
\ddot{F}^{11,11}=& \ddot f^{11},\quad \ddot{F}^{22,22}=\ddot f^{22},
\quad \ddot F^{11,22}=\ddot f^{12},\\
\ddot{F}^{12,12}&=\ddot{F}^{21,21}=\frac{\dot f^1-\dot f^2}{r_1-r_2}
\end{align*}
It follows
that the second term on the right hand side of \eqref{eq:evolve-G} is as follows:
\begin{align*}
Q:=&\left(\dot{G}^{ij}\ddot{F}^{kl,mn}-\dot{F}^{ij
}\ddot{G}^{kl,mn}\right)\nabla_i\rr_{kl}\nabla_j\rr_{mn}\\
=&\left(\dot g^1\ddot f^{11}-\dot f^1\ddot g^{11}\right)(\nabla_1\rr_{11})^2
+\left(\dot g^1\ddot f^{22}-\dot f^1\ddot g^{22}\right)(\nabla_1\rr_{22})^2\\
+&2\left(\dot g^1\ddot f^{12}-\dot f^1\ddot g^{12}\right)\nabla_1\rr_{11}\nabla_1\rr_{22}
+2\frac{\dot g^1\dot f^2-\dot f^1\dot g^2}{r_2-r_1}\left(\nabla_1\rr_{12}\right)^2\\
+&\left(\dot g^2\ddot f^{11}-\dot f^2\ddot g^{11}\right)(\nabla_2\rr_{11})^2
+\left(\dot g^2\ddot f^{22}-\dot f^2\ddot g^{22}\right)(\nabla_2\rr_{22})^2\\
+&2\left(\dot g^2\ddot f^{12}-\dot f^2\ddot g^{12}\right)\nabla_2\rr_{11}\nabla_2\rr_{22}
+2\frac{\dot g^1\dot f^2-\dot g^2\dot f^1}{r_2-r_1}(\nabla_2\rr_{12})^2
\end{align*}
 At the maximum
point $(p, t)$ of $G$, we have $G=0$ and $\nabla_iG=0$. The latter gives
two equations which may be written as follows:
\begin{equation}\label{eq:1stderiv}
T_1:=\frac{\nabla_1\rr_{11}}{\dot g^2}=-\frac{\nabla_1\rr_{22}}{\dot g^1};\quad
T_2:=\frac{\nabla_2\rr_{22}}{\dot g^1}=-\frac{\nabla_2\rr_{11}}{\dot g^2}.
\end{equation}
These identities and the Codazzi symmetries $\nabla_1\rr_{12} =
\nabla_2\rr_{11}$ and $\nabla_2\rr_{12} = \nabla_1\rr_{22}$  reduce $Q$ to a linear combination of
$(\nabla_1\rr_{22})^2$ and $(\nabla_2\rr_{11})^2$:
These identities and the Codazzi symmetries $\nabla_1\rr_{12} =
\nabla_2\rr_{11}$ and $\nabla_2\rr_{12} = \nabla_1\rr_{22}$ reduce $Q$ to a linear combination of
$T^2_1$ and $T_2^2$:  We have $Q=Q_1T_1^2+Q_2T_2^2$, where
\begin{align*}
Q_1&=\left(\dot g^1\ddot f-\dot f^1\ddot g\right)(\dot g^2e_1-\dot g^1e_2,\dot g^2e_1-\dot g^1e_2)
+2(\dot g^1)^2\frac{\dot g^1\dot f^2-\dot g^2\dot f^1}{r_2-r_1};\\
Q_2&=\left(\dot g^2\ddot f-\dot f^2\ddot g\right)(\dot g^2e_1-\dot g^1e_2,\dot g^2e_1-\dot g^1e_2)
+2(\dot g^2)^2\frac{\dot g^1\dot f^2-\dot g^2\dot f^1}{r_2-r_1}.
\end{align*}
Now using the expression $g=-f(r_2-r_1)-C_t$ we obtain
\begin{equation*}
\begin{aligned}
\dot g^1 &= f-\dot f^1(r_2-r_1);\\
\dot g^2 &= -f-\dot f^2(r_2-r_1);
\end{aligned}\quad\text{\rm and}\quad
\begin{aligned}
\ddot g^{11}&=2\dot f^1-\ddot f^{11}(r_2-r_1);\\
\ddot g^{12}&=\dot f^2-\dot f^1-\ddot f^{12}(r_2-r_1);\\
\ddot g^{22}&=-2\dot f^2 -\ddot f^{22}(r_2-r_1).
\end{aligned}
\end{equation*}
The terms involving $\ddot f$ produce significant cancellations, resulting in the following:
\begin{align}
Q_1 &= f\ddot f(\dot g^2e_1-\dot g^1e_2,\dot g^2e_1-\dot g^1e_2)\label{eq:Q1}\\
&\quad\null+2f(\dot f^1+\dot f^2)\left(\frac{f^2}{r_2-r_1}-2f\dot f^1-\dot f^1\dot f^2(r_2-r_1)\right);\notag\\
Q_2 &= -f\ddot f(\dot g^2e_1-\dot g^1e_2,\dot g^2e_1-\dot g^1e_2)\label{eq:Q2}\\
&\quad\null+2f(\dot f^1+\dot f^2)\left(\frac{f^2}{r_2-r_1}+2f\dot f^2-\dot f^1\dot f^2(r_2-r_1)\right).\notag
\end{align}
Now we use the homogeneity of the speed $f$:  Write $f=-k^{-\alpha}$, where $k$ is homogeneous of degree one.  This gives
\begin{equation*}
\dot f:=\alpha k^{-(1+\alpha)}\dot k;\quad\text{\rm and}\quad
\ddot f:=-\alpha(1+\alpha)k^{-(2+\alpha)}\dot k\otimes \dot k+\alpha k^{-(1+\alpha)}\ddot k.
\end{equation*}
The term  $\ddot f(\dot g^2e_1-\dot g^1e_2,\dot g^2e_1-\dot g^1e_2)$ in \eqref{eq:Q1}
and \eqref{eq:Q2} appears to involve fourth powers of $\alpha$, with two powers from $\ddot k$
and one from each factor $\dot g^2e_1-\dot g^1e_2$.  However,  these are proportional
 to $(\dot k\otimes\dot k)(\dot k^2e_1-\dot k^1e_2,\dot k^2e_1-\dot k^1e_2)
 =(\dot k(\dot k^2e_1-\dot k^1e_2)^2$, which is zero.  Thus this term involves
  at most three powers of $\alpha$.  Since there is a common factor $\alpha$,
  this term is equal to $\alpha$ multiplied by a quadratic function of $\alpha$.
   The other terms in $Q_1$ and $Q_2$ are of degree at most three in $\alpha$ and
   have a factor $\alpha$, so we have the following:

\begin{lemma}\label{lem:quadratic}
$\frac{Q_1}{\alpha}$ and $\frac{Q_2}{\alpha}$ are quadratic functions of $\alpha$.
\end{lemma}

We now show the following:

\begin{lemma}\label{lem:convex}
$\frac{Q_1}{\alpha }$ and $\frac{Q_2}{\alpha}$ are convex functions of $\alpha$.
\end{lemma}

\begin{proof}
By Lemma \ref{lem:quadratic} we must show the $\alpha^3$ terms in
$Q_1$ and $Q_2$ are non-negative.   In the first line of
\eqref{eq:Q1},  $-\alpha^3k^{-3-4\alpha} (r_2-r_1)^2\ddot k(\dot
k^2e_1-\dot k^1e_2,\dot k^2e_1-\dot k^1e_2)$ is the only such term.
 From the second line of \eqref{eq:Q1} only the last term in the bracket contributes, yielding
$2\alpha^3k^{-3-4\alpha}(r_2-r_1)\dot k^1\dot k^2(\dot k^1+\dot k^2)$.  In the
expression \eqref{eq:Q2} for $Q_2$ the same two terms arise, but the sign of the
first is reversed.  Thus the condition for both $Q_2/\alpha$ and $Q_1/\alpha$ to be convex is precisely
$$
2\dot k^1\dot k^2(\dot k^1+\dot k^2)\geq (r_2-r_1)\left|\ddot k(\dot
k^2e_1-\dot k^1e_2,\dot k^2e_1-\dot k^1e_2)\right|.
$$
For Gauss curvature flows $k=\sqrt{r_1r_2}$, so the left hand side is
 $\frac14\left(\sqrt{\frac{r_2}{r_1}}+\sqrt{\frac{r_1}{r_2}}\right)$,
while the right is
$\frac14\left|\sqrt{\frac{r_2}{r_1}}-\sqrt{\frac{r_1}{r_2}}\right|$.
 The inequality clearly holds.
\end{proof}

It was proved in \cite{AndrewsStones} that $Q_1$ and $Q_2$
are non-positive when $\alpha=2$,
and in \cite{AndrewsNonconcave} that $Q_1$ and $Q_2$ are
non-positive for $\alpha=1$ (for any $k$).
 By Lemma \ref{lem:convex}, $Q_1$ and $Q_2$ are non-positive
 for any $\alpha\in[1,2]$, and hence $Q\leq 0$.

Now we consider the first two terms on the second line of \eqref{eq:evolve-G}:
\begin{align*}
Z:&=\left(F+\dot F^{ij}\rr_{ij}\right)\dot{G}^{ij}g_{ij}-\dot
F^{kl}g_{kl}\dot{G}^{ij}\rr_{ij}\\
&=(f+\dot f^1r_1+\dot f^2r_2)(\dot g^1+\dot g^2)-(\dot f^1+\dot f^2)(\dot g^1r_1+\dot g^2r_2)\\
&=(f+\dot f^1r_1+\dot f^2r_2)(-\dot f^1(r_2-r_1)+f-\dot f^2(r_2-r_1)-f)\\
&\quad\null-(\dot f^1+\dot f^2)((-\dot f^1(r_2-r_1)+f)r_1+(-\dot f^2(r_2-r_1)-f)r_2)\\
&=-(f+\dot f^1r_1+\dot f^2r_2)(\dot f^1+\dot f^2)(r_2-r_1)\\
&\quad\null -(\dot f^1+\dot f^2)(-f-\dot f^1r_1-\dot f^2r_2)(r_2-r_1)\\
&=0.
\end{align*}
Since the last term on the second line of \eqref{eq:evolve-G} is strictly negative,
we arrive at a contradiction.  Therefore $G$ remains negative as long as the solution
exists, and allowing $\varepsilon$ to approach zero proves the Theorem.
\end{proof}
\begin{corollary}\label{pinching cor}
For a smooth compact strictly convex surface $M_t$ in $\R^3$,
flowing according to $\frac{\partial X}{\partial t}=-K^{\alpha/2}\nu$ with $1\leq \alpha\leq 2$,
there exists $C_1=C_1(M_0,\alpha)$ such that
$0<\frac{1}{C_1}\le\frac{r_2}{r_1}\le C_1$.
\end{corollary}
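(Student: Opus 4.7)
The plan is to combine the pinching estimate (Theorem \ref{curvature pinch}) with the uniform upper radius bound (Lemma \ref{A2 conv pre}) and turn the resulting algebraic inequality into a bound on $r_2/r_1$. Theorem \ref{curvature pinch} yields $B = B(M_0,\alpha)$ such that $(r_2-r_1)^2 \leq B\,(r_1 r_2)^\alpha$ uniformly in $(x,t)\in M\times[0,T)$, and Lemma \ref{A2 conv pre} gives a uniform upper bound $r_1, r_2 \leq C = C(M_0)$. Ordering the principal radii so that $r_2\ge r_1$ and writing $\rho=r_2/r_1 \geq 1$, substitution gives
\begin{equation*}
(\rho-1)^2 \leq B\,r_1^{2\alpha-2}\,\rho^\alpha,
\end{equation*}
and since $\alpha\ge 1$, we have $2\alpha-2\ge 0$, so we may replace $r_1^{2\alpha-2}$ by $C^{2\alpha-2}$ to obtain $(\rho-1)^2 \leq B'\rho^\alpha$ with $B'=B\,C^{2\alpha-2}$ depending only on $M_0$ and $\alpha$.

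For $\alpha\in[1,2)$, this inequality forces $\rho$ to be bounded by elementary algebra: either $\rho\le 2$, or $\rho-1 \ge \rho/2$, in which case $\rho^{2-\alpha} \le 4B'$, so $\rho\le(4B')^{1/(2-\alpha)}$. In either case $\rho$ is controlled by a constant depending only on $M_0$ and $\alpha$.

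The endpoint $\alpha=2$ is the only subtle case, since there the "dominant exponent" argument degenerates ($(\rho-1)^2\le B'\rho^2$ no longer bounds $\rho$ when $B'\geq 1$). For this case I would instead rewrite the pinching inequality as $|r_1^{-1}-r_2^{-1}|\le \sqrt{B}$, and use $r_2^{-1}\ge C^{-1}$ to conclude $r_1^{-1}\le r_2^{-1}+\sqrt{B}$, so that $r_2/r_1\le 1+\sqrt{B}\,r_2\le 1+\sqrt{B}\,C$.

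Assembling both ranges yields the upper bound $r_2/r_1 \le C_1$ for some $C_1=C_1(M_0,\alpha)$; the lower bound $r_2/r_1\ge 1\ge 1/C_1$ is automatic from the ordering. There is no real obstacle here beyond recognizing that the uniform "dominant exponent" argument fails precisely at $\alpha=2$ and handling that endpoint by the minor reformulation in terms of curvature differences.
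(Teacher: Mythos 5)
Your argument is correct and uses exactly the same two ingredients as the paper --- the pinching estimate of Theorem \ref{curvature pinch} combined with the uniform upper radius bound of Lemma \ref{A2 conv pre} --- so it is essentially the paper's proof. The only difference is cosmetic: the paper uses the identity $\frac{(r_1-r_2)^2}{(r_1r_2)^{\alpha}}=(r_1r_2)^{1-\alpha}\left(\frac{r_1}{r_2}+\frac{r_2}{r_1}-2\right)\ge C^{2-2\alpha}\left(\frac{r_1}{r_2}+\frac{r_2}{r_1}-2\right)$ (valid since $\alpha\ge1$ and $r_1r_2\le C^2$), which bounds $r_2/r_1$ for all $\alpha\in[1,2]$ in one stroke and so avoids the separate treatment you give to the endpoint $\alpha=2$.
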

\begin{proof}
Choose $C>0$ such that $r_1,\,r_2<C$ at $t=0$. Lemma \ref{A2 conv
pre} and Theorem \ref{curvature pinch} implies that
$$C^{2-2\alpha}\left(\frac{r_1}{r_2}+\frac{r_2}{r_1}-2\right)
\leq (r_1r_2)^{1-\alpha}\frac{(r_1-r_2)^2}{r_1r_2}
=\frac{(r_1-r_2)^2}{(r_1r_2)^{\alpha}}\le C_0^2.$$ We obtain the
bound on $\frac{r_2}{r_1}$ claimed above.
\end{proof}

\subsection{Convergence}
Given the pinching estimate of Theorem \ref{curvature pinch}, the
proof of convergence is rather straightforward:  We already know
that the surfaces remain smooth until they contract to a point, from
the result of \cite{Chow1}.  From Theorem \ref{curvature pinch} we
can deduce a strong pinching result of the kind used in
\cite{AndrewsMcCoy} as follows: Ordering the principal radii as
$r_2\geq r_1$ as before, the principal curvatures $\kappa_i=1/r_i$
satisfy $\kappa_1\geq \kappa_2$, and Theorem \ref{curvature pinch}
implies that
$$
\kappa_1-\kappa_2\leq C_0(\kappa_1\kappa_2)^{\frac{2-\alpha}{2}}.
$$
By corollary \ref{pinching cor} we have
$$
\kappa_1-\kappa_2\leq C_0C_1^{\frac{2-\alpha}{2}}\kappa_2^{2-\alpha}.
$$
For $\alpha\in (1,2)$ we have $2-\alpha\in(0,1)$. By Young's inequality we have for any $\varepsilon>0$
$$
\kappa_2^{2-\alpha}\leq \varepsilon \kappa_2+ (\alpha-1)(2-\alpha)^{\frac{2-\alpha}{\alpha-1}}\varepsilon^{-\frac{2-\alpha}{1-\alpha}}.
$$
Thus it follows that for any $\varepsilon>0$ there exists $C(\varepsilon)$ such that
$$
\kappa_1\leq (1+\varepsilon)\kappa_2+C(\varepsilon).
$$
The argument of  \cite{AndrewsMcCoy}*{Theorem 3.1} applies to prove
that for any $\delta>0$ there exists $r(\delta)>0$ such that
 the ratio of circumradius $r_+(M_t)$ to inradius $r_-(M_t)$ is bounded by $1+\delta$ provided $r_+(M)<r(\delta)$.
 Then \cite{AndrewsMcCoy}*{Proposition 12.1} applies to prove that the speed $K^{\alpha/2}$ is bounded
 below by $C(T-t)^{-\frac{\alpha}{1+\alpha}}$ for $t$ sufficiently close to $T$, and the higher regularity
  and convergence of the rescaled hypersurfaces follow as described in \cite{AndrewsMcCoy}*{Section 13}.

\section{REMARKS AND EXTENSIONS}
\begin{remark}
In the proof of the pinching estimate we used the structure of the Gauss curvature flows only in a few places:
In the proof that the zero order terms $Z$ in equation \eqref{eq:evolve-G} vanish we did not use any information
 about the speed, so this part of the argument works for arbitrary flows of surfaces by curvature, and amounts
 to a closed form solution of the `reaction' part of the evolution of curvature in such flows.  In the proof
  of non-positivity of the gradient terms $Q$ we used the homogeneity of the speed, but only the particular
  speed $K^{\alpha/2}$ at the last stage.  We expect this argument will apply for a reasonably large family
  of flows with high degree of homogeneity $\alpha$, but emphasise that the argument  relied on the fact
  that the case $\alpha=2$ is already known from \cite{AndrewsStones} (the case $\alpha=1$ is known for
   very general flows from \cite{AndrewsNonconcave}).
\end{remark}

\begin{remark}
Although we used only the highest order terms in $\alpha$ in the proof, one can of course compute
$Q_1$ and $Q_2$ completely for flows by powers of Gauss curvature.  This is a somewhat lengthy but
 straightforward computation, and yields
\begin{align*}
\frac{Q_1}{2\alpha}&\!=\!\frac{(2\alpha^2\!-\!5\alpha\!+\!2) r_1^2
r_2^3\!-\!(4\alpha^2\!-\!7\alpha\!+\!6)r_1^3r_2^2
\!+\!(2\alpha^2\!-\!3\alpha\!-\!2)r_1^4r_2\!+\!(\alpha\!-\!2)r_1^5}{(r_1r_2)^{\alpha/2+2}(r_2-r_1)(\alpha
r_2+(2-\alpha)r_1)^2}\\
\frac{Q_2}{2\alpha}&\!=\!
\frac{(2\alpha^2\!-\!5\alpha\!+\!2) r_2^2
r_1^3\!-\!(4\alpha^2\!-\!7\alpha\!+\!6)r_2^3r_1^2
\!+\!(2\alpha^2\!-\!3\alpha\!-\!2)r_2^4r_1\!+\!(\alpha\!-\!2)r_2^5}{(r_1r_2)^{\alpha/2+2}(r_2-r_1)(\alpha
r_1+(2-\alpha)r_2)^2}
\end{align*}
One can then check directly that $Q_1$ and $Q_2$ are non-positive for $\alpha$ in the range $[1,2]$.
We can also see from these expressions that the pinching estimate fails for $\alpha>2$ if $r_2/r_1$ is
 large, since the coefficient of $r_2^5$ in the numerator of $Q_2$ is then positive.  One can also
 check that $Q_1$ and $Q_2$ are non-positive for $\alpha=1/2$, and hence by the above argument
 for $\alpha\in[1/2,2]$, but not for $0<\alpha<\frac12$.  The resulting pinching estimate for $\alpha<1$
 is not useful for surfaces in Euclidean space, but is a strong pinching estimate for convex spacelike
 co-compact hypersurfaces in Minkowski space $\R^{2,1}$.  The flow for $\alpha=1/2$ is particularly interesting
  because it is the affine invariant flow
studied for Euclidean convex hypersurfaces in \cite{AndrewsAffine}.
\end{remark}

\begin{remark}
The solution of the `reaction' ODE system in the form $F(\rr_2-\rr_1)=c$ provides a natural choice of pinching quantity to consider for other flows.  The conditions required on $F$ to make the gradient terms favourable to preserve this condition are not simple to understand in any generality, but can be checked in concrete examples:  We announce here some results of this investigation, details of which will be provided in a separate paper:   Applying this recipe for flows by powers $H^\alpha$ of the mean curvature we find the quantity $\sup_{M_t}H^\alpha|\kappa_2-\kappa_1|/K$ is non-increasing under the flow provided $1\leq \alpha\leq\alpha_*$ where $\alpha_*$ is approximately $5.16$.  These flows were considered previously in \cite{Schn}, where pinching estimates were found for $\alpha=2,3,4$, and in \cite{Schu} where pinching quantities were found for $1\leq\alpha\leq 5$.  Similarly we can consider flow by powers of $|A|=\sqrt{\kappa_1^2+\kappa_2^2}$:  Here our pinching estimate $|A|^\alpha|\kappa_2-\kappa_1|/K\leq C$ applies for $\alpha\geq 1$ up to approximately $8.15$ (the values $\alpha=2$ and $\alpha=4$ were considered in \cite{Schn}).   Finally, in the case where $F=\kappa_1^\alpha+\kappa_2^\alpha$, the pinching estimate $F|\kappa_2-\kappa_1|/K\leq C$ holds for any $\alpha>1$.  This is the first example where flows of arbitrarily high degree of homogeneity have been shown to flow convex surfaces to spheres.
\end{remark}

%%%%%%%%%%%%%%%%%%%%%%%%%%%%%%%%%%%%%%%%%%%%%%%%%%%%%%%%%%%%
\begin{bibdiv}
\begin{biblist}

\bib{AndrewsContractCalc}{article}{
   author={Andrews, Ben},
   title={Contraction of convex hypersurfaces in Euclidean space},
   journal={Calc. Var. Partial Differential Equations},
   volume={2},
   date={1994},
   number={2},
   pages={151--171},
  }

\bib{AndrewsHarnack}{article}{
   author={Andrews, Ben},
   title={Harnack inequalities for evolving hypersurfaces},
   journal={Math. Z.},
   volume={217},
   date={1994},
   number={2},
   pages={179--197},
   }

\bib{AndrewsAffine}{article}{
   author={Andrews, Ben},
   title={Contraction of convex hypersurfaces by their affine normal},
   journal={J. Differential Geom.},
   volume={43},
   date={1996},
   number={2},
   pages={207--230},
   }

\bib{AndrewsStones}{article}{
   author={Andrews, Ben},
   title={Gauss curvature flow: the fate of the rolling stones},
   journal={Invent. Math.},
   volume={138},
   date={1999},
   number={1},
   pages={151--161},
   }

\bib{AndrewsMHGC}{article}{
   author={Andrews, Ben},
   title={Motion of hypersurfaces by Gauss curvature},
   journal={Pacific J. Math.},
   volume={195},
   date={2000},
   number={1},
   pages={1--34},
  }

\bib{AndrewsICM}{article}{
   author={Andrews, B.},
   title={Positively curved surfaces in the three-sphere},
   conference={
      title={},
      address={Beijing},
      date={2002},
   },
   book={
      publisher={Higher Ed. Press},
      place={Beijing},
   },
   date={2002},
   pages={221--230},
   }

\bib{AndrewsPinching}{article}{
   author={Andrews, Ben},
   title={Pinching estimates and motion of hypersurfaces by curvature
   functions},
   journal={J. Reine Angew. Math.},
   volume={608},
   date={2007},
   pages={17--33},
  }

\bib{AndrewsNonconcave}{article}{
   author={Andrews, Ben},
   title={Moving surfaces by non-concave curvature functions},
   journal={Calc. Var. Partial Differential Equations},
   volume={39},
   date={2010},
   number={3-4},
   pages={649--657},
   }

\bib{AndrewsMcCoy}{article}{
    author={Andrews, Ben},
    author={McCoy, James A.},
    title={Convex Hypersurfaces with pinched
curvatures and flow of convex hypersurfaces by high powers of
curvature},
    journal={Trans. Amer. Math. Soc.},
    status={to appear},
    eprint={arXiv:0910.0376v1 [math.DG]},
    }

\bib{Chow1}{article}{
   author={Chow, Bennett},
   title={Deforming convex hypersurfaces by the $n$th root of the Gaussian
   curvature},
   journal={J. Differential Geom.},
   volume={22},
   date={1985},
   number={1},
   pages={117--138},
   }

\bib{Firey}{article}{
   author={Firey, William J.},
   title={Shapes of worn stones},
   journal={Mathematika},
   volume={21},
   date={1974},
   pages={1--11},
  }

\bib{Ham3D}{article}{
   author={Hamilton, Richard S.},
   title={Three-manifolds with positive Ricci curvature},
   journal={J. Differential Geom.},
   volume={17},
   date={1982},
   number={2},
   pages={255--306},
  }

\bib{Schn}{article}{
   author={Schn{\"u}rer, Oliver C.},
   title={Surfaces contracting with speed $\vert A\vert ^2$},
   journal={J. Differential Geom.},
   volume={71},
   date={2005},
   number={3},
   pages={347--363},
  }

  \bib{Schu}{article}{
   author={Schulze, Felix},
   title={Convexity estimates for flows by powers of the mean curvature},
   journal={Ann. Sc. Norm. Super. Pisa Cl. Sci. (5)},
   volume={5},
   date={2006},
   number={2},
   pages={261--277},
  }

\bib{TsoPoint}{article}{
   author={Tso, Kaising},
   title={Deforming a hypersurface by its Gauss-Kronecker curvature},
   journal={Comm. Pure Appl. Math.},
   volume={38},
   date={1985},
   number={6},
   pages={867--882},
  }

\end{biblist}\end{bibdiv}
\end{document}